\def\blfootnote{\xdef\@thefnmark{}\@footnotetext}
\newtheorem{thm}{Theorem}[section]
\newtheorem{cor}[thm]{Corollary}
\newtheorem{lem}[thm]{Lemma}
\newtheorem{prob}[thm]{Problem}
\theoremstyle{definition}
\theoremstyle{remark}
\newtheorem{rem}[thm]{Remark}
\newfont{\eufm}{eufm10}
\newcommand{\e}{\varepsilon }
\renewcommand{\phi}{{\rm Lab }}
\renewcommand{\kappa}{\varkappa}
\renewcommand{\ll}{\left\langle\hspace{-.7mm}\left\langle }
\newcommand{\rr}{\right\rangle\hspace{-.7mm}\right\rangle }
\begin{document}

\title{$L^2$-Betti numbers and non-unitarizable groups without free subgroups}
\author{D. V. Osin \thanks{This work has been supported by the NSF grant DMS-0605093}}
\date{}%

\maketitle

\begin{abstract}
We show that there exist non-unitarizable groups without non-abelian free subgroups.
Both torsion and torsion free examples are constructed. As a by-product, we show that there exist finitely generated torsion groups
with non-vanishing first $L^2$-Betti numbers. We also relate the well-known problem of whether every hyperbolic group is residually finite to an open question about approximation of $L^2$-Betti numbers.
\end{abstract}

\section{Introduction}
Let $G$ be a group, $H$ a Hilbert space. Recall that a
representation $\pi\colon\;G\to B(H)$ is {\it unitarizable} if
there exists an invertible operator ${S}\colon\;H\to H$ such that $g\to
{S}^{-1} \pi(g){S}$ is a unitary representation of $G$. A (locally compact)
group $G$ is {\it unitarizable} if every uniformly bounded
representation $\pi\colon\;G\to B(H)$ is unitarizable.

In 1950, Day \cite{Day} and Dixmier~\cite{Dix} proved that every amenable
group is unitarizable. The question of whether the converse holds
has been open since then. A good survey of the current research in this direction is given in \cite{Pis}.

The simplest examples of a non-unitarizable groups are non-abelian free groups. (An explicit construction of a uniformly bounded non-unitarizable representation can be found \cite{MZ}). Since unitarizability passes to subgroups, every group containing a non-abelian free subgroup is not unitarizable as well. However, the answer to the following question was
unknown until now.

\begin{prob}
Does there exist a non-unitarizable group without non-abelian free
subgroup?
\end{prob}

Note that if there is such a group, it is a non-amenable group without non-abelian free subgroups. The existence of such groups remained a fundamental open problem for many years until the first examples were constructed by Olshanskii in \cite{Ols80}. The aim of this note is to answer the question affirmatively. Namely we prove the following.

\begin{thm}\label{1}
There exists a finitely generated torsion non-unitarizable group.
\end{thm}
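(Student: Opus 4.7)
\medskip
\noindent\textbf{Proof proposal.} The plan is to split Theorem~\ref{1} into an analytic input and a group-theoretic construction: first reduce the statement to the existence of a finitely generated torsion group with positive first $L^2$-Betti number, and then build such a group by iterated small cancellation over relatively hyperbolic groups.

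For the reduction I would use (or establish elsewhere in the paper) the implication that a countable group $G$ with $\beta_1^{(2)}(G)>0$ must be non-unitarizable. The mechanism is that non-vanishing of the first $L^2$-Betti number produces a non-trivial reduced $\ell^2$-cohomology class, i.e.\ an unbounded harmonic $1$-cocycle $b\colon G\to \ell^2 G$, which can be used, in the spirit of Pytlik--Szwarc and Epstein--Monod, to manufacture a uniformly bounded representation that fails to be similar to a unitary one. Granted this principle, the problem becomes purely group-theoretic: exhibit a finitely generated torsion group $G$ with $\beta_1^{(2)}(G)>0$ (this is precisely the by-product advertised in the abstract).

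To build such a $G$, I would start with a finitely generated group $G_0$ that is generated by torsion elements and already has strictly positive first $L^2$-Betti number. A convenient choice is the free product $G_0=\Z/p\Z\ast\Z/p\Z\ast\Z/p\Z$ for a prime $p\geq 5$, for which $\beta_1^{(2)}(G_0)=1-3/p>0$. Of course $G_0$ is not itself torsion, since it contains non-abelian free subgroups, so the next step is to kill infinite-order elements. Enumerating representatives $g_1,g_2,\ldots$ of the infinite-order conjugacy classes of $G_0$, I would inductively form quotients $G_i=G_{i-1}/\langle\langle g_i^{n_i}\rangle\rangle$ for rapidly growing exponents $n_i$. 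Using the Olshanskii--Osin--Sapir framework of small cancellation over relatively hyperbolic groups, I would arrange each stage so that $G_i$ is again hyperbolic relative to a collection of torsion (virtually cyclic) parabolics, that the image of $g_i$ has finite order in $G_i$, and that the relator $g_i^{n_i}$ satisfies a strong quantitative small cancellation condition over $G_{i-1}$. The direct limit $G=\varinjlim G_i$ is then finitely generated and torsion.

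The main obstacle is to show $\beta_1^{(2)}(G)>0$, i.e.\ that the first $L^2$-Betti number survives an infinite tower of quotients. A priori $\beta_1^{(2)}$ can collapse under quotients, so the argument must be quantitative. I would exploit the small cancellation condition to show that the new relator at stage $i$ is ``spread thin'' in the Cayley graph of $G_{i-1}$ and therefore contributes at most a small, controllable amount to the $\ell^2$-deficiency of the presentation; by choosing $n_i$ to grow fast enough the total loss becomes summable. Combining this per-step estimate with a L\"uck-type approximation of $L^2$-Betti numbers along the normal tower $G_0\twoheadrightarrow G_1\twoheadrightarrow\cdots$ should yield a bound of the form $\beta_1^{(2)}(G)\geq \tfrac{1}{2}\beta_1^{(2)}(G_0)>0$. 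I expect this approximation/stability step, rather than the existence of the torsion quotient itself, to be the technical heart of the proof.
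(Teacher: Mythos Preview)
Your reduction step contains a genuine gap. You assert that $\beta_1^{(2)}(G)>0$ alone forces $G$ to be non-unitarizable, but this is not what Epstein--Monod prove and, as far as I am aware, is not known. Their criterion (Theorem~\ref{EM} in the paper) says only that if $G$ is unitarizable then the ratio $\beta_1^{(2)}(H)/\sqrt{{\rm rk\,}(H)}$ is \emph{uniformly bounded} over all finitely generated subgroups $H\le G$. A single finitely generated torsion group with positive $\beta_1^{(2)}$ does not violate this bound; you need an infinite family of subgroups along which the ratio blows up. The heuristic you sketch---turn an unbounded $\ell^2$-cocycle into a uniformly bounded non-unitarizable representation---does not go through as stated: the Pytlik--Szwarc and Epstein--Monod constructions require quantitative control beyond the mere existence of such a cocycle, and the implication you want would already be a special case of the open Dixmier problem.

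The paper's argument therefore diverges from yours at exactly this point. It runs essentially the construction you outline (start from a free product of copies of $\mathbb Z_m$, impose high-power relations one by one, keep each quotient hyperbolic via Lemma~\ref{perrel}), but does so \emph{for every $n\ge 2$}, producing an $n$-generated torsion group $G_n$ with $\beta_1^{(2)}(G_n)\ge n-2$. The direct product $\prod_{n\ge 2}G_n$ is then a countable torsion group containing subgroups with $\beta_1^{(2)}(G_n)/\sqrt{n}\to\infty$, hence non-unitarizable by Epstein--Monod; finally this product is embedded into a $2$-generated torsion group via Olshanskii's embedding theorem, and non-unitarizability passes up to the overgroup. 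Your construction of one torsion group with positive $\beta_1^{(2)}$ is close to what is actually done---though the per-step estimate is the Peterson--Thom inequality (Theorem~\ref{PT}) rather than an ad hoc ``spread thin'' argument, and passage to the limit uses Pichot's semi-continuity in the space of marked groups rather than L\"uck approximation---but by itself it does not finish Theorem~\ref{1}.
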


The proof of Theorem \ref{1} is a combination of a sufficient condition for non-unitarizability found by Epstein and Monod \cite{EM}, a recent result of Peterson and Thom \cite{PT} about first $L^2$-Betti numbers of groups defined by periodic relations, and some older techniques related to hyperbolic groups \cite{Gro, Ols93}. Though the property of being torsion is crucial for this approach, we show that the torsion group from Theorem \ref{1} can be used to construct examples of completely different nature.

\begin{thm}\label{2}
There exists a finitely generated torsion free non-unitarizable group without free subgroups.
\end{thm}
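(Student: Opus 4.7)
The plan is to descend from Theorem~\ref{1}. Let $T$ denote the finitely generated torsion non-unitarizable group produced there. The starting observation is that non-unitarizability passes to any group surjecting onto $T$: if $\psi\colon G\twoheadrightarrow T$ is a surjection and $\pi\colon T\to B(H)$ is a uniformly bounded non-unitarizable representation, then $\pi\circ\psi$ is a uniformly bounded representation of $G$, and any invertible $S$ unitarizing $\pi\circ\psi$ must also unitarize $\pi$ by surjectivity of $\psi$. It therefore suffices to construct a finitely generated torsion-free group $G$, without non-abelian free subgroups, admitting a surjection onto $T$.

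To build such a $G$, fix a presentation $T=F/\ll R\rr$ with $F$ a free group of finite rank, and construct $G$ as the direct limit of an iterated quotient chain $F=G_0\twoheadrightarrow G_1\twoheadrightarrow G_2\twoheadrightarrow\cdots$, where $G_{n+1}=G_n/\ll r_{n+1}\rr$ is obtained by imposing a single new relator $r_{n+1}\in\ll R\rr_F$. The insistence that $r_{n+1}\in\ll R\rr_F$ at every step guarantees a compatible surjection $G\twoheadrightarrow T$ in the limit. The sequence $(r_n)$ must further be chosen so that $(i)$ each $G_n$ is a torsion-free hyperbolic group, and $(ii)$ every two-generator non-abelian free subgroup that appears at some finite stage acquires a non-trivial relation at some later stage.

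For $(i)$ I would invoke Olshanskii's small cancellation theory over hyperbolic groups \cite{Ols93}: provided $r_{n+1}$ is not a proper power and satisfies a sufficiently strong small cancellation condition with respect to the relators of $G_n$, the quotient $G_{n+1}$ is again torsion-free and hyperbolic. For $(ii)$ I would enumerate pairs $(u,v)\in F\times F$ on a diagonal schedule; at the step handling $(u,v)$, if the images of $u$ and $v$ in $G_n$ generate a non-abelian free subgroup, then their images in $T$ must satisfy some non-trivial relation (because $T$ has no non-abelian free subgroups), which provides a non-trivial word $w(u,v)\in\ll R\rr_F$. I would then amplify $w$ by long products with suitably chosen conjugates of elements of $R$ to produce a non-proper-power word $r_{n+1}\in\ll R\rr_F$ meeting the small cancellation hypothesis; imposing this $r_{n+1}$ destroys the free pair $(u,v)$ in $G_{n+1}$.

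The principal technical obstacle is precisely the marriage of $(i)$ with the constraint $r_{n+1}\in\ll R\rr_F$: an arbitrary short word in $\ll R\rr_F$ need not meet Olshanskii's hypotheses, so the amplification step must be engineered carefully. This is the kind of diagonal small cancellation argument performed in \cite{Ols93}, and I expect the difficulty to be bookkeeping rather than conceptual. Once the construction is in place, $G$ is finitely generated by the image of a basis of $F$; torsion-free because a finite-order element of $G$ would already be finite-order in some torsion-free $G_n$; has no non-abelian free subgroup by the diagonal enumeration; and surjects onto $T$, so is non-unitarizable by the opening observation.
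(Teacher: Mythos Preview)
There is a real gap in your step (ii). You propose to ``amplify $w(u,v)$ by long products with suitably chosen conjugates of elements of $R$'' to obtain a small-cancellation relator $r_{n+1}\in\ll R\rr_F$. But if $r_{n+1}=w(u,v)\cdot c$ with $c$ a product of $F$-conjugates of elements of $R$, then imposing $r_{n+1}=1$ only yields $w(u,v)=c^{-1}$ in $G_{n+1}$; since $c$ is not a word in $u,v$, this is \emph{not} a relation internal to the subgroup $\langle u,v\rangle$, and there is no reason the pair $(u,v)$ ceases to be free. To make your scheme work the relator must lie in $\langle u,v\rangle\cap\ll R\rr_F$, not merely in $\ll R\rr_F$. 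This can be arranged---the image of $\langle u,v\rangle\cap\ll R\rr_F$ in $G_n$ is a non-elementary subgroup (it is free of rank $\ge 2$, being the kernel of a map from $F_2$ to a torsion group), and one can run Olshanskii's construction inside it---but that is not the move you describe, and it is where the actual work lies.

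The paper avoids this difficulty by a different construction. Rather than using the same generating set as $T$, it passes to $\langle x,y,a,b\rangle$ and uses Lemma~\ref{glue} repeatedly to force $H=\langle a,b\rangle$ to be normal with $G/H\cong T$, while simultaneously arranging that every proper finitely generated subgroup of $H$ is cyclic. No free pairs are killed by enumeration; the absence of non-abelian free subgroups follows structurally from the extension $1\to H\to G\to T\to 1$: any free $F\le G$ has $F\cap H$ cyclic (by the property of $H$), normal in $F$, and nontrivial (since $F/(F\cap H)$ embeds in the torsion group $T$), forcing $F$ itself to be cyclic. The auxiliary generators $a,b$ provide a fixed non-elementary ``workspace'' in which Lemma~\ref{glue} can always be applied directly, so one never has to locate small-cancellation words inside a varying prescribed subgroup.
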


As a by-product, we also obtain some new results about $L^2$-Betti
numbers of groups. Recall that if $G$ is a torsion free group
satisfying the Atiyah Conjecture (or even a weaker property
($\ast$) introduced in \cite{PT}), then $\beta_1^{(2)}(G)>0$
implies the existence of non-abelian free subgroups in $G$
\cite[Theorem 4.1]{PT}. For finitely presented residually
$p$-finite groups, where $p$ is a prime, even a stronger result
holds. Namely $\beta_1^{(2)}(G)>0$ for such a group $G$ implies
that $G$ is large \cite{Lack}. These examples lead to a natural
question of whether non-vanishing of the first $L^2$-Betti number
always implies the existence of non-abelian free subgroups. The
following theorem shows that the answer is negative.

\begin{thm}\label{torb}
There exists a finitely generated torsion group with non-vanishing first $L^2$-Betti number.
\end{thm}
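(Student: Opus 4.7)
The plan is to observe that Theorem \ref{torb} drops out essentially for free from the proof of Theorem \ref{1}. Indeed, the Epstein--Monod criterion \cite{EM} invoked to establish non-unitarizability in Theorem \ref{1} relies on non-vanishing of the first $L^2$-Betti number (or a cohomological invariant controlled by it), so the construction producing the torsion non-unitarizable group already produces, as an intermediate conclusion, a finitely generated torsion group with $\beta_1^{(2)}>0$. This is exactly what Theorem \ref{torb} asks for.

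In concrete terms, I would proceed as follows. Begin with a non-elementary torsion-free hyperbolic group $G_0$ having $\beta_1^{(2)}(G_0)>0$; the simplest choice is a free group $F_k$ of rank $k\ge 2$, for which $\beta_1^{(2)}(F_k)=k-1$. Next, apply the small cancellation machinery over hyperbolic groups developed by Gromov \cite{Gro} and Olshanskii \cite{Ols93} to construct a tower
\[
G_0\twoheadrightarrow G_1\twoheadrightarrow G_2\twoheadrightarrow\cdots
\]
of non-elementary hyperbolic quotients, each obtained from its predecessor by adjoining relators of the form $g^{n_g}$ for suitable elements $g$ and sufficiently large exponents $n_g$. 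With an appropriate enumeration one arranges that every element of $G_0$ is eventually sent to a torsion element in some $G_i$, so that the direct limit $G=\varinjlim G_i$ is a finitely generated infinite torsion group.

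The non-vanishing of $\beta_1^{(2)}(G)$ is then obtained by invoking the Peterson--Thom theorem \cite{PT}, which guarantees that passage to a quotient by periodic relations preserves non-vanishing of $\beta_1^{(2)}$ (modulo their property $(\ast)$ or an analogous hypothesis). Applied along the tower and then in the limit, this yields $\beta_1^{(2)}(G)>0$. The main obstacle lies in ensuring that the Olshanskii-style hyperbolic iteration can be carried out so that only genuinely periodic relators are introduced at each stage, and so that the hypotheses of \cite{PT} remain satisfied throughout the direct system. Since exactly this technical compatibility has to be verified in order to prove Theorem \ref{1}, no further work is required: the finitely generated torsion group constructed for Theorem \ref{1} serves simultaneously as the example demanded by Theorem \ref{torb}.
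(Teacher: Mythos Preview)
Your approach is essentially the paper's: Theorem~\ref{torb} is an immediate consequence of Theorem~\ref{tor}, which builds an $n$-generated torsion group $T$ with $\beta_1^{(2)}(T)\ge n-1-\varepsilon$ via an Olshanskii-type tower of hyperbolic quotients combined with the Peterson--Thom inequality, and this is exactly the scheme you outline. (The paper starts from $\mathbb{Z}_m\ast\cdots\ast\mathbb{Z}_m$ rather than a free group, but your variant works just as well.)

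One genuine gap, however: the Peterson--Thom result (Theorem~\ref{PT}) is stated for \emph{finitely many} periodic relators, and its hypothesis is not property~$(\ast)$ but the requirement that each $r_j$ have order exactly $w_j$ in the quotient --- this is precisely what parts (2) and (3) of Lemma~\ref{perrel} secure at every finite stage $T_i$. Your phrase ``and then in the limit'' glosses over a real step: Theorem~\ref{PT} gives $\beta_1^{(2)}(T_i)>n-1-\varepsilon$ for each $i$, but to conclude $\beta_1^{(2)}(T)>0$ for the direct limit one needs an additional ingredient, namely Pichot's semi-continuity of $\beta_1^{(2)}$ on the space of marked groups \cite{Pich}. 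The paper invokes this explicitly; without it your limit argument is incomplete.
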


Moreover, Theorem \ref{torb} allows us to relate a question about
approximation of $L^2$-Betti numbers to one of the most intriguing
open problems about hyperbolic groups. Recall that a group $G$ is
{\it residually finite} if for every element $g\ne 1$ of $G$ there
is a homomorphism $\e\colon G\to Q$, where $Q$ is finite, such
that $\e(g)\ne 1$. By the Approximation Theorem of L\"uck
\cite{Luck}, for every residually finite finitely presented group
$G$ and every nested sequence of finite index normal subgroups $\{
N_i\} $ of $G$ with trivial intersection, one has
\begin{equation}\label{Luck}
\beta_1^{(2)} (G) =\lim\limits_{i\to \infty}
\frac{b_1(N_i)}{[G:N_i]} ,
\end{equation}
where $b_1(N_i)$ is the ordinary first Betti number of $N_i$. The
following question is still open.
\begin{prob}\label{lueck}
Does the approximation hold for any finitely generated residually finite group?
\end{prob}

The other problem is well-known. For a survey of the theory of
hyperbolic groups we refer to \cite{Gro,BH}.
\begin{prob}\label{hyprf}
Is every hyperbolic group residually finite?
\end{prob}

We show that if every hyperbolic group is residually finite, then the group from Theorem \ref{torb} can be made residually finite as well. However this contradicts (\ref{Luck}) since $\beta _1(N_i)=0$ for any subgroup of a torsion group. Thus we obtain the following.

\begin{cor}
At least one of the two problems has a negative solution.
\end{cor}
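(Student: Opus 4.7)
The plan is to argue by contradiction: assume that Problem~\ref{lueck} has an affirmative answer (L\"uck approximation holds for all finitely generated residually finite groups) and that Problem~\ref{hyprf} also has an affirmative answer (every hyperbolic group is residually finite), and derive a contradiction from Theorem~\ref{torb}.

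The heart of the argument is the auxiliary claim alluded to in the introduction: \emph{if every hyperbolic group is residually finite, then the group constructed for Theorem~\ref{torb} can be taken to be residually finite}. Since the construction of this torsion group is built via the hyperbolic-small-cancellation machinery of \cite{Gro,Ols93} (with a periodic relator set of the type considered by Peterson--Thom \cite{PT}), it is obtained as a direct limit $\bar{G}=\varinjlim G_n$ of a sequence of hyperbolic quotients $G_0\twoheadrightarrow G_1\twoheadrightarrow\dots$ whose defining relators are added stage by stage to enforce the torsion condition. Granting residual finiteness of each $G_n$, one can at each stage intersect the kernel of the next relator-imposing map with a finite-index normal subgroup of $G_n$ that separates a prescribed finite set of nontrivial elements. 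A standard diagonal argument then exhibits $\bar{G}$ as a subdirect product of finite groups, i.e.\ residually finite. The main obstacle in this step is to verify that, during such a modification, the Peterson--Thom mechanism guaranteeing $\beta_1^{(2)}(\bar{G})>0$ is preserved, and that the torsion property is still attained in the limit; this requires that the finite quotients chosen are sufficiently sparse (large enough girth/period) so that the periodic relator scheme still applies at the next stage.

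Once such a finitely generated residually finite torsion group $\bar{G}$ with $\beta_1^{(2)}(\bar{G})>0$ is in hand, the rest is immediate. Being finitely generated and residually finite, $\bar{G}$ admits a nested sequence $\{N_i\}$ of finite-index normal subgroups with $\bigcap_i N_i=\{1\}$ (take successive intersections of cofinitely many finite-index normal subgroups separating an enumeration of $\bar{G}\setminus\{1\}$). By the assumed extension of L\"uck's theorem from Problem~\ref{lueck},
\[
\beta_1^{(2)}(\bar{G})=\lim_{i\to\infty}\frac{b_1(N_i)}{[\bar{G}:N_i]}.
\]
However, every subgroup of the torsion group $\bar{G}$ is itself torsion, so $H_1(N_i;\Z)$ is a torsion abelian group and hence $b_1(N_i)=0$ for all $i$. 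Therefore $\beta_1^{(2)}(\bar{G})=0$, contradicting the choice of $\bar{G}$.

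Thus not both of Problems~\ref{lueck} and~\ref{hyprf} can have positive answers. The hard part of the argument, as indicated, is the construction-level claim that residual finiteness of hyperbolic groups propagates through the torsion-group construction underlying Theorem~\ref{torb}; the L\"uck-approximation step and the vanishing of ordinary Betti numbers for subgroups of torsion groups are then formal.
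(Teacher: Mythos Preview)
Your outline is correct and follows the paper's own route: assume both problems have positive answers, invoke the ``moreover'' clause of Theorem~\ref{tor} to get a finitely generated residually finite torsion group $T$ with $\beta_1^{(2)}(T)>0$, and then observe that L\"uck approximation forces $\beta_1^{(2)}(T)=0$ because every $N_i$ is torsion and hence has $b_1(N_i)=0$.

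The one place where your write-up is imprecise is the description of \emph{how} residual finiteness of the hyperbolic stages $T_i$ is pushed through to the limit $T$. The phrase ``intersect the kernel of the next relator-imposing map with a finite-index normal subgroup'' does not describe a workable step, and ``finite quotients chosen sufficiently sparse (large girth/period)'' inverts what actually happens. In the paper's construction the finite quotients $\tau_i\colon T_i\to Q_i$ are chosen first (arbitrarily, just injective on the ball of radius $i$), and then the \emph{periods} $w_j$ for $j>i$ are taken to be multiples of $|Q_i|$. Since every element of $Q_i$ has order dividing $|Q_i|$, each later relator $r_j^{w_j}$ dies in $Q_i$, so $\tau_i$ factors through the limit $T$; this is what yields residual finiteness of $T$. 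Your diagonal/subdirect-product language is the right intuition, but the concrete mechanism is this divisibility trick on the exponents, and it is compatible with the Peterson--Thom estimate precisely because Lemma~\ref{perrel} allows the exponent $s$ to be any sufficiently large integer (hence any sufficiently large multiple of a prescribed number).
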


\noindent {\bf Acknowledgment} I am grateful to Nicolas Monod for drawing my attention to the paper \cite{EM} and stimulating discussions. I am also grateful to Jesse Peterson for explaining results of \cite{PT}.

\section{Torsion groups with positive first $L^2$-betty numbers}

Recall that a group is {\it elementary} if it contains a cyclic
subgroup of finite index. For every hyperbolic group $G$ and every
element $g\in G$ of infinite order, there exists a (unique)
maximal elementary subgroup $E(g)\le G$ containing $g$ (see, e.g.,
\cite[Lemma 1.16]{Ols93}.

Given an element $g$ of a group $G$, we denote by $\ll g\rr$ the normal closure of $g$ in $G$, i.e., the smallest normal subgroup of $G$ containing $g$. Our main tool in this section is the following result about adding higher powered relations to hyperbolic groups. Up to little changes it was conjectured by Gromov \cite{Gro}. It can easily be extracted from the proof of (a more complicated) Theorem 3 in \cite{Ols93}. Since the result we need  does not formally follow from \cite[Theorem 3]{Ols93}, we briefly explain how to derive it from other results of \cite{Ols93} for convenience of the reader.

\begin{lem}[Olshanskii, \cite{Ols93}]\label{perrel}
Let $G$ be a hyperbolic group, $S$ a finite subset of $G$, $E$ a maximal elementary subgroup of $G$, $C$ a finite
index normal cyclic subgroup of $E$. Suppose that $C=\langle x\rangle $. Then for every sufficiently large integer $n$ the following conditions hold.
\begin{enumerate}
\item[(1)] The quotient group $G/\ll x^n\rr $ is hyperbolic.

\item[(2)] The image of the element $x$ in $G/\ll x^n\rr $ has order $n$.

\item[(3)] The natural homomorphism $G\to G/\ll x^n\rr $ is injective on $S$.
\end{enumerate}
\end{lem}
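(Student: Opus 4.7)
The plan is to invoke Olshanskii's theory of graded small cancellation over hyperbolic groups developed in \cite{Ols93}. The strategy is to show that the single relator $x^n$ satisfies a sufficiently strong small cancellation condition when $n$ is large, and then read off the three conclusions from the general quotient theorems in that framework.

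The first step is to isolate the key rigidity property of $x$. Since $E$ is the maximal elementary subgroup of $G$ containing $x$ and $\langle x\rangle$ has finite index in $E$, $E$ coincides with the subgroup that controls conjugation of powers of $x$, and \cite[Lemma 1.16]{Ols93} (together with adjacent results) guarantees that an element $g\in G\setminus E$ cannot simultaneously translate long subwords of $x^{\pm n}$ close to one another in the Cayley graph of $G$. Quantitatively, this yields an upper bound, independent of $n$, on the length of a common subword shared by $x^n$ and any nontrivial conjugate $g x^{\pm n} g^{-1}$ with $g\notin E$.

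From this rigidity the verification of Olshanskii's small cancellation condition for the single relator $x^n$ is immediate: the pieces have length bounded by a constant, while the relator itself has length $n$, so choosing $n$ large makes each piece arbitrarily small relative to $n$. The general quotient theorems of \cite{Ols93} then give hyperbolicity of $G/\ll x^n\rr$, yielding conclusion (1). Conclusion (3) follows because the same theorems guarantee that the natural map is injective on words of length less than a fixed fraction of $n$; taking $n$ large enough that every element of $S$ is represented in $G$ by a word shorter than this threshold finishes this step. Conclusion (2) is similar: a relation $x^k=1$ in the quotient with $0<k<n$ would correspond to a van Kampen diagram over the presentation whose boundary label consumes more than the permitted fraction of a copy of $x^n$, contradicting the small cancellation estimate for $n$ large.

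The main obstacle, and the technical heart of \cite{Ols93}, is the geometric work behind the rigidity step: controlling the shape of geodesic quadrilaterals in the Cayley graph of $G$ whose two long sides are labelled by powers of $x$, by combining thin-triangle estimates with the structure theory of maximal elementary subgroups of hyperbolic groups. Once that geometric input is in place, the three conclusions of the lemma are formal consequences of the general small cancellation machinery, which is precisely why the statement can be extracted from the proof of \cite[Theorem~3]{Ols93} without appealing to it as a black box.
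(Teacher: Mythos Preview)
Your proposal is correct and takes essentially the same approach as the paper: both invoke Olshanskii's graded small cancellation theory over hyperbolic groups to show that the single relator $x^n$ satisfies the requisite condition for large $n$, and then read off conclusions (1)--(3) from the general quotient machinery. The paper's proof is terser, pinning the two steps to \cite[Lemma~4.1]{Ols93} (the small cancellation condition for cyclic shifts of $W^{\pm n}$) and \cite[Lemma~6.7]{Ols93} (the consequences for the quotient), whereas you unpack the geometric intuition behind those lemmas.
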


\begin{proof}
Let $X$ be a finite generating set of $G$, $W$ a shortest word in $X\cup X^{-1}$ representing $x$ in $G$. By \cite[Lemma 4.1]{Ols93} the set of all cyclic shifts of the words $W^{\pm m}$ satisfies a small cancellation condition, which implies properties (1)-(3) for $G=\langle G\mid W^n=1\rangle \cong G/\ll x^n\rr $ by \cite[Lemma 6.7]{Ols93} if $m=m(G,x,S)$ is sufficiently large.
\end{proof}

For a background on $L^2$-Betti numberst we refer the reader to \cite{Luck-book}. In what follows we assume that $1/|G| =0$ if a group $G$ has infinite order. Recall that for $G=G_1\ast \cdots \ast G_n$, we have
\begin{equation}\label{bfp}
\beta_1^{(2)} (G)=n-1+ \sum_{i=1}^n \left(\beta_1^{(2)} (G_i)-\frac{1}{|G_i|}\right)
\end{equation}
(see\cite{Luck08}). The following theorem of Peterson and Thom \cite{PT} will allow us to control first $L^2$-Betti number after adding higher-powered relations to $G$.

\begin{thm} \cite[Theorem 3.2]{PT} \label{PT} Let $G$ be an infinite
countable discrete group. Assume that there exist subgroups $G_1,\dots,G_n$ of $G$,
such that
$$G = \langle G_1, \dots , G_n \mid  r_1^{w_1},\dots,r_k^{w_k} \rangle,$$
for some elements $r_1,\dots,r_k \in G_1 \ast \cdots \ast G_n$ and positive integers $w_1,\dots,w_k$.
Suppose in addition that the order of $r_i$ in $G$ is $w_i$.
Then, the following inequality holds:
$$\beta_{1}^{(2)}(G) \geq n-1 + \sum_{i=1}^n \left( \beta_1^{(2)}(G_i) - \frac{1}{|G_i|}\right) - \sum_{j=1}^k \frac{1}{w_j}. $$
\end{thm}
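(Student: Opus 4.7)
The idea is to realise $G$ as a quotient of a larger free product for which formula (\ref{bfp}) applies directly, and then to invoke a ``drop by at most $k$'' estimate for $\beta_1^{(2)}$ under quotients by a $k$-element normal generating set.

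For each $j=1,\dots,k$ let $C_j=\langle c_j\mid c_j^{w_j}\rangle$ and form the free product
$$\tilde G=G_1*\cdots*G_n*C_1*\cdots*C_k.$$
The map $\pi\colon\tilde G\to G$ equal to the identity on every $G_i$ and sending $c_j$ to $r_j$ is well-defined (since $r_j^{w_j}=1$ in $G$) and surjective. Modulo the $k$ elements $c_jr_j^{-1}$, the relation $c_j^{w_j}=1$ of $\tilde G$ becomes $r_j^{w_j}=1$, and by hypothesis these exhaust the defining relations of $G$; hence $\ker\pi$ is the normal closure in $\tilde G$ of $\{c_jr_j^{-1}\}_{j=1}^k$. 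Applying (\ref{bfp}) to the $(n+k)$-fold free product $\tilde G$, using $\beta_1^{(2)}(C_j)=0$ and $|C_j|=w_j$, yields
$$\beta_1^{(2)}(\tilde G)=(n+k)-1+\sum_{i=1}^n\left(\beta_1^{(2)}(G_i)-\frac{1}{|G_i|}\right)-\sum_{j=1}^k\frac{1}{w_j}.$$
The theorem then reduces to the general drop estimate
$$\beta_1^{(2)}(\tilde G/N)\geq\beta_1^{(2)}(\tilde G)-k,$$
valid for any group $\tilde G$ and any $N\trianglelefteq\tilde G$ which is the normal closure of $k$ elements.

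Proving this drop estimate is the main obstacle. I would derive it from the five-term exact sequence of the Lyndon--Hochschild--Serre spectral sequence for $1\to N\to\tilde G\to\tilde G/N\to 1$ with coefficients in the group von Neumann algebra $\mathcal N(\tilde G/N)$. Its tail
$$H_0\bigl(\tilde G/N;\,H_1(N;\mathcal N(\tilde G/N))\bigr)\to H_1\bigl(\tilde G;\mathcal N(\tilde G/N)\bigr)\to H_1\bigl(\tilde G/N;\mathcal N(\tilde G/N)\bigr)\to 0$$
gives, on taking L\"uck's extended von Neumann dimension, an estimate of the form
$$\beta_1^{(2)}(\tilde G/N)\geq\beta_1^{(2)}(\tilde G)-\dim_{\mathcal N(\tilde G/N)}\!\bigl(N^{\mathrm{ab}}\otimes_{\mathbb Z[\tilde G/N]}\mathcal N(\tilde G/N)\bigr).$$
Since $N^{\mathrm{ab}}$ is generated as a $\mathbb Z[\tilde G/N]$-module by the images of the $k$ chosen normal generators of $N$, the rightmost dimension is at most $k$. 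The technical delicacy is dimension-additivity for the (possibly non-finitely generated) modules that appear; this I would handle by invoking the additivity and cofinality properties of L\"uck's extended dimension function, after which everything is formal.
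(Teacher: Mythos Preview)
The paper does not prove this result; it is quoted from \cite{PT} as a black box, so there is no in-paper argument to compare against. Your reduction---absorbing the exponents $w_j$ into cyclic free factors $C_j$ of $\tilde G$ and then killing the $k$ elements $c_jr_j^{-1}$---is correct, and the computation of $\beta_1^{(2)}(\tilde G)$ via (\ref{bfp}) is fine.

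The gap is in the drop estimate. The tail of the five-term sequence only yields
\[
\beta_1^{(2)}(G)\;\ge\;\dim_{\mathcal N(G)}H_1\bigl(\tilde G;\,\mathcal N(G)\bigr)\;-\;k,
\]
where $G=\tilde G/N$ and $\tilde G$ acts on $\mathcal N(G)$ through the quotient map. You tacitly replace the middle quantity by $\beta_1^{(2)}(\tilde G)=\dim_{\mathcal N(\tilde G)}H_1(\tilde G;\mathcal N(\tilde G))$, but these are taken over different von Neumann algebras and can disagree: for any perfect $\tilde G$ with $\beta_1^{(2)}(\tilde G)>0$ and $G=\{1\}$ one has $\dim_{\mathbb C}H_1(\tilde G;\mathbb C)=0$. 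So the blanket inequality $\beta_1^{(2)}(\tilde G/N)\ge\beta_1^{(2)}(\tilde G)-k$ does not follow from your argument and is not a standard fact you may simply invoke. What rescues the specific $\tilde G$ at hand is exactly the hypothesis you used only to define $\pi$: every free factor $G_i$ and $C_j\cong\langle r_j\rangle$ sits inside $G$ as a genuine subgroup. For a subgroup $H\le G$ one has $\dim_{\mathcal N(G)}H_p(H;\mathcal N(G))=\beta_p^{(2)}(H)$, and a Mayer--Vietoris computation over the Bass--Serre tree of the free product then evaluates $\dim_{\mathcal N(G)}H_1(\tilde G;\mathcal N(G))$ (together with the $H_0$ contribution) directly, reproducing the right-hand side of (\ref{bfp}). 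This step---not the formal dimension bookkeeping you flag as the ``technical delicacy''---is where the real content of \cite{PT} lies; insert it in place of the unjustified identification and your outline becomes a correct proof.
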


\begin{thm}\label{tor}
For every positive integer $n$ and every $\e>0$, there exists an $n$-generated torsion group $T$ with $\beta _1^{(2)}(T) \ge n-1-\e $. Moreover, if every hyperbolic group is residually finite, then the group $T$ can be additionally made residually finite.
\end{thm}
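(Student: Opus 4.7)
The plan is to start with the free group $F_n$ and build a tower of hyperbolic quotients $F_n = G_0 \twoheadrightarrow G_1 \twoheadrightarrow G_2 \twoheadrightarrow \cdots$ obtained by iterated application of Lemma \ref{perrel}, each step imposing one new relation $x_i^{n_i} = 1$. The exponents $n_i$ are chosen so large that $\sum_i 1/n_i \leq \e$, which via Theorem \ref{PT} forces $\beta_1^{(2)}(T) \ge n - 1 - \e$ for the direct limit $T$, while a diagonal enumeration of $F_n$ simultaneously guarantees that every element of $T$ is torsion.

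In detail, fix an enumeration $f_0, f_1, \dots$ of $F_n$. At stage $i$ let $g_i$ be the image of $f_i$ in $G_i$; if $g_i$ has finite order set $G_{i+1} = G_i$, otherwise let $E_i = E(g_i)$ be the maximal elementary subgroup of $G_i$ containing $g_i$ and choose a finite-index normal cyclic subgroup $C_i = \langle x_i\rangle$ of $E_i$, so that $g_i^{k_i} = x_i^{m_i}$ for positive integers $k_i, m_i$. Let $S_i \subset G_i$ be the finite set consisting of all nontrivial powers $x_j^s$ with $j < i$ and $1 \le s < n_j$. Lemma \ref{perrel} yields a threshold $N_i$ such that for every $n \ge N_i$ the quotient $G_i/\ll x_i^n\rr$ is hyperbolic, $x_i$ has exact order $n$ in it, and the map $G_i \to G_i/\ll x_i^n\rr$ is injective on $S_i$. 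Choose $n_i \ge N_i$ with $1/n_i < \e/2^{i+1}$ and set $G_{i+1} = G_i/\ll x_i^{n_i}\rr$; then $g_i^{k_i n_i} = 1$, so $g_i$ becomes torsion, and every previously chosen $x_j$ retains its exact order $n_j$.

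Let $T$ denote the direct limit of the $G_i$. Then $T$ is $n$-generated and torsion by construction, and lifting each $x_i$ to $\tilde x_i \in F_n$ presents $T$ as $\langle F_n \mid \tilde x_i^{n_i},\, i \ge 0\rangle$ with each $\tilde x_i$ of order exactly $n_i$ in $T$. Theorem \ref{PT} applied with a single free factor $F_n$ (either by invoking the countable-relator extension of its proof, or by applying it to each finite-stage $G_k$ to obtain $\beta_1^{(2)}(G_k) \ge n-1-\sum_{i<k} 1/n_i$ and passing to the limit via semicontinuity of $L^2$-Betti numbers under such colimits) then gives
\begin{equation*}
\beta_1^{(2)}(T) \ge n - 1 - \sum_{i \ge 0}\frac{1}{n_i} \ge n - 1 - \e.
\end{equation*}

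For the residually finite refinement, assume every hyperbolic group is residually finite. At stage $i$, when $g_i \ne 1$ in $G_i$, use residual finiteness of $G_i$ to pick a finite quotient $\pi_i\colon G_i \twoheadrightarrow Q_i$ with $\pi_i(g_i) \ne 1$. For $\pi_i$ to descend to a homomorphism $T \twoheadrightarrow Q_i$ it suffices that every subsequent relator $x_j^{n_j}$ ($j \ge i$) vanish in $Q_i$, i.e., that $n_j$ be a multiple of the order of $\pi_i(x_j)$ in $Q_i$; this is a finite divisibility constraint at each stage, compatible with the earlier requirements since Lemma \ref{perrel} allows $n_j$ to be an arbitrary sufficiently large integer. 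The resulting family of maps $T \twoheadrightarrow Q_i$ separates non-trivial elements, witnessing residual finiteness. The principal technical obstacle is the application of Peterson--Thom to the countably infinite presentation of $T$; everything else is a controlled diagonal argument driven by Lemma \ref{perrel}.
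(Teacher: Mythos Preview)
Your overall strategy---build a chain of hyperbolic quotients via Lemma~\ref{perrel}, control exponents so that $\sum 1/n_i<\e$, and invoke semicontinuity for the limit---matches the paper. The residual-finiteness refinement is also essentially the paper's argument. However, there is a genuine gap in your application of Theorem~\ref{PT}.

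Theorem~\ref{PT} requires the factors $G_1,\dots,G_n$ to be \emph{subgroups} of the group $G$ being analyzed: one needs both the presentation $G=\langle G_1,\dots,G_n\mid r_j^{w_j}\rangle$ and the embeddings $G_i\hookrightarrow G$. When you take a single factor $G_1=F_n$, this forces the quotient map $F_n\to G$ to be injective, hence an isomorphism, so the theorem says nothing about any proper quotient of $F_n$. Equivalently, if you view $F_n$ as $\mathbb Z*\cdots*\mathbb Z$, you would need each generator to have infinite order in $G_k$ (and in $T$); but your construction eventually makes every generator torsion, and of course no copy of $\mathbb Z$ sits inside the torsion limit $T$. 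Thus neither the direct application to $T$ nor the finite-stage application to $G_k$ for large $k$ is valid as written, and the inequality $\beta_1^{(2)}(G_k)\ge n-1-\sum_{i<k}1/n_i$ is not justified.

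This is exactly why the paper does not start from $F_n$. It fixes $m$ with $n/m<\e$, starts from $G=\mathbb Z_m*\cdots*\mathbb Z_m$, and then---crucially---includes the finite set $\bigcup_l G_l$ in the set $S$ at every step, so that Lemma~\ref{perrel}(3) guarantees each $\mathbb Z_m$ remains embedded in every $T_i$ (condition~(b) of the proof). With finite cyclic factors that genuinely survive as subgroups, Theorem~\ref{PT} applies at each stage and yields $\beta_1^{(2)}(T_i)\ge n-1-n/m-\sum_j 1/w_j>n-1-\e$, after which semicontinuity finishes the job. Your set $S_i$ tracks only the previously introduced torsion elements $x_j$, not the generating factors; adding the missing idea of finite cyclic factors (and enlarging $S_i$ accordingly) would repair the argument and bring it in line with the paper's proof.
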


\begin{proof}
Roughly speaking, the main idea of the proof is to start with the free product $G=\mathbb Z_m \ast\cdots\ast \mathbb Z_m$ and then add periodic relations $r^w=1$ (one by one) for all $r\in  G$, where $m$ and all $w=w(r)$ are large enough. We are going to use Theorem \ref{PT} to prove that the first $L^2$-betti number of the groups obtained at each step is close to the number of free factors in $G$. A continuity argument will then help us carry over this estimate to the limit group. The only difficulty is to verify the hypothesis of Theorem \ref{PT} concerning orders of $r_i$'s and to ensure that the images of $\mathbb Z_m$'s remain isomorphic to $\mathbb Z_m$ on each step. This is done by using hyperbolic groups and Lemma \ref{perrel}.

Let $m$ be an integer such that $n/m <\e $. Let $G=G_1\ast \cdots \ast G_n$, where $G_i\cong \mathbb Z_m$ for each $i=1, \ldots, n$. We enumerate all elements of $G=\{ 1=g_0, g_1, g_2, \ldots \} $, and construct the group $T$ by induction. Let $T_0=G$. Suppose that a group $$T_i=\langle G_1, \ldots , G_n\mid r_1^{w_1}, \ldots , r_{k_i}^{w_{k_i}}\rangle, $$ ${k_i}\le i$, has already been constructed for some $i\ge 0$. In what follows, we use the same notation for elements of $T_0$ and their images in $T_i$. We assume that:
\begin{enumerate}
\item[(a)] $T_i$ is hyperbolic.

\item[(b)] The natural maps $G_l\to T_i$ are injective for $l=1, \ldots, n$. In particular, we may think of $G_l$'s as subgroups of $T_i$.

\item[(c)] $|r_j|=w_j$ in $T_i$ for $j=1, \ldots , {k_i}$.

\item[(d)] $\sum_{j=1}^{k_i} \frac{1}{w_j} +\frac{n}{m} <\e .$

\item[(e)] Elements $g_0, \ldots , g_i$ have finite orders in $T_i$.
\end{enumerate}
Observe that the inductive assumption trivially holds for $T_0$. By Theorem \ref{PT}, conditions (b), (c), and (d) imply that
$$
\beta^{(2)}_1 (T_i)>n-1-\e .
$$

The group $T_{i+1}$ is obtained from $T_i$ in the following way. If the image of $g_{i+1}$ has finite order in $T_i$, we set $k_{i+1}=k_i$ and $T_{i+1}=T_i$. Otherwise let $C$ be an (infinite) finite index cyclic normal subgroup of $E(g_{i+1})$. Since $|E(g_{i+1})/C|$ is finite, there exists  $m>0$ such that $g_{i+1}^m\in C$. Consequently, $\langle g_{i+1}^{m}\rangle $ is normal in $E(g_{i+1})$. Passing to $\langle g_{i+1}^m\rangle $, we may assume that $C=\langle g_{i+1}^m\rangle $ without loss of generality. Let $k_{i+1}=k_i+1$, $r_{k_{i+1}}=g_{i+1}$ and $$S=\left( \bigcup\limits_{l=1}^n G_l\right) \cup \left( \bigcup\limits_{j=1}^{k_i} \langle r_j\rangle \right) .$$  Applying Lemma  \ref{perrel} to the group $G=T_i$, the element $x=g_{i+1}^m$, and the subset $S$, we obtain that for every large enough integer $s$, the quotient group
\begin{equation}\label{Ti+1}
T_{i+1}=\langle T_i \mid r_{k_{i+1}}^{sm}\rangle =\langle G_1, \ldots , G_n\mid r_1^{w_1}, \ldots , r_{k_i}^{w_{k_i}}, r_{k_{i+1}}^{sm}\rangle
\end{equation}
is hyperbolic, $|g_{i+1}^m|=s$ (hence $|r_{k_{i+1}}|=|g_{i+1}|=ms$) in $T_{i+1}$, and the natural homomorphism $T_i\to T_{i+1}$ is injective on $S$. The later condition ensures that $|r_j|=w_j$ in $T_{i+1}$ for $j=1, \ldots , {k_i}$ and the natural maps $G_l\to T_{i+1}$ remain injective for $l=1, \ldots, n$ . By (d), we may choose $s$ such that
$$
\sum_{j=1}^{{k_i}} \frac{1}{w_j} +\frac{1}{ms} +\frac{n}{m} <\e .
$$
Letting  $k_{i+1}=k_i+1$ and  $w_{k_{i+1}}=ms$ completes the inductive step.

Let $$T=\langle G_1, \ldots , G_n\mid r_1^{w_1}, r_{2}^{w_{2}}, \ldots \rangle $$ be the inductive limit of the groups $T_i$ and the natural homomorphisms $T_i\to T_{i+1}$.  By (e) the image of every element $g_i$ has finite order in $T$, i.e., $T$ is a torsion group.

Note that the sequence $\{T_i\}$ converges to $T$ in the topology of marked group presentations. (For details about this topology we refer to \cite{Pich}.) Indeed this is always true whenever we have a sequence of normal subgroups $N_1\le N_2 \le \ldots $ of a group $T_0$, $T=T_0/\bigcup\limits_{i=1}^\infty N_i$, and $T_i=T_0/N_i$. (In our case $N_i$ is the normal closure of $r_1, \ldots , r_{k_i}$ in $T_0$). By semi-continuity of the first $L^2$-Betti number (see \cite{Pich}), we obtain $$\beta^{(2)}_1 (T)\ge \lim\limits_{i\to \infty}   \beta^{(2)}_1 (T_i) \ge n-1-\e .$$

Suppose now that every hyperbolic group is residually finite. Then
we adjust our construction as follows. Let $X$ be a finite
generating set of $T_0$ and let $d_i$ denote the word metric on
$T_i$ corresponding to the natural image of $X$ in $T_i$. For
every $i\in \mathbb N$, we choose a homomorphism $\tau_i\colon
T_i\to Q_i$, where $Q_i$ is finite, such that $\tau _i(t)\ne 1$
whenever $d_i(t,1)\le i$ and $t\ne 1$. Such a homomorphism always
exists as $T_i$ is hyperbolic and hence it is residually finite by
our assumption.

Note that passing from $T_i$ to $T_{i+1}$ according to (\ref{Ti+1}) we may always choose $s$ to be a multiple of any given non-zero integer. Thus we may assume that $w_j$ is divisible by $|Q_i|$ for any  $i,j\in \mathbb N$, $j>i$. This implies that for every $i\in \mathbb N$, the kernel of the natural homomorphism $T_i\to T$ is contained in ${\rm Ker} (\tau _i)$ and hence $\tau _i$ factors through $T_i\to T$. Let $\sigma _i$ be the corresponding homomorphism $T\to Q_i$.

Denote by $d$ the word metric on $T$ with respect to the natural image of the set $X$. If $s$ is a nontrivial element of $T$ such that $d(s, 1)=i$, then there is an element $t\in T_i$ such that $d_i (t, 1)\le  i$ and $s$ is the natural image of $t$ in $T$. According to our construction, $\tau _i(t)\ne 1$ and hence $\sigma _i(t)\ne 1$. Thus the group $T$ is residually finite.
\end{proof}

\section{Non-unitarizable groups without free subgroups}

Given a finitely generated group $H$, we denote by ${\rm rk \,}(H)$ its rank.

\begin{thm}[Epstein-Monod \cite{EM}]\label{EM}
Let $G$ be a unitarizable group. Then the ratio $\beta_1^{(2)}(H)/\sqrt{ {\rm rk\, } (H)}$ is uniformly bounded on the set of all finitely generated subgroups of $G$.
\end{thm}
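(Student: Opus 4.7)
The strategy is to argue the contrapositive. By the Dixmier--Day averaging trick in quantitative form, unitarizability of $G$ provides a single constant $K(G)$ bounding the unitarization distortion of every uniformly bounded representation of every subgroup of $G$. Accordingly, it suffices to construct, for each finitely generated $H\le G$, a uniformly bounded representation $\pi$ of $H$ whose unitarization constant satisfies
\[
K(\pi)\;\gtrsim\;\frac{\beta_1^{(2)}(H)}{\sqrt{{\rm rk\,}(H)}};
\]
the theorem then follows immediately.

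To construct $\pi$, I would exploit the first reduced $\ell^2$-cohomology $\overline H^1(H,\ell^2 H)$, which is a Hilbert $L(H)$-module of von Neumann dimension equal to $\beta_1^{(2)}(H)$. Elements of this module are reduced $1$-cocycles $b\colon H\to \ell^2 H$, and the Pytlik--Szwarc--Valette deformation converts any such $b$ into an analytic family $\pi_z$ of uniformly bounded representations of $H$ on $\ell^2 H$, with $\pi_0$ the left regular representation and derivative at zero essentially equal to $b$. The reduced cohomology class of $b$ controls the failure of $\pi_z$ to be unitarizable for small nonzero $z$, so a lower bound on $K(\pi_z)$ will come from a lower bound on the mass of $b$ evaluated against a generating set $X$ of $H$ of size $r={\rm rk\,}(H)$.

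The quantitative heart of the argument is then to show that $b$ can be chosen so that this mass scales like $\beta_1^{(2)}(H)/\sqrt{r}$. I would take $b$ to be a direct sum of orthogonal cocycle representatives drawn from the full $\beta_1^{(2)}(H)$-dimensional module, so that the vector-valued cocycle accumulates an independent contribution from each direction. Comparing the resulting cocycle norm on $X$ to the trace on $L(H)$, the number of independent directions contributes a factor of $\beta_1^{(2)}(H)$, while summing over the $r$ generators costs at most $\sqrt{r}$ by a Hilbert-space Cauchy--Schwarz estimate. Plugging this into the Pytlik--Szwarc--Valette machinery yields the desired lower bound on $K(\pi_z)$, and hence on $K(G)$.

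The principal obstacle is the sharpness of the trace/Hilbert-space calculation in the last step: one must convert the $L(H)$-dimensional data on $\overline H^1(H,\ell^2 H)$ into a genuine scalar lower bound on the cocycle's action on a rank-$r$ generating set, and this requires a careful use of the semifinite trace to assemble an orthogonal family of cocycle representatives whose collective contribution on $X$ grows \emph{linearly} in $\beta_1^{(2)}(H)$, rather than merely as its square root. The appearance of $\sqrt{r}$ in the denominator (rather than $r$) reflects the Hilbert-space geometry of this averaging argument, and it is the technical step most likely to require machinery specific to \cite{EM}.
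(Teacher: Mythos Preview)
The paper does not prove this theorem at all: it is quoted verbatim as a result of Epstein and Monod \cite{EM} and used as a black box in the proof of Theorem~\ref{1}. There is therefore no ``paper's own proof'' to compare your proposal against.

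As for the sketch itself, the overall architecture is reasonable---one does want to manufacture, from large first $L^2$-cohomology, uniformly bounded representations whose unitarization cost is quantifiably large---but two points deserve caution. First, the Pytlik--Szwarc deformation you invoke is a construction tied to actions on trees (equivalently, to free groups); turning an arbitrary reduced $\ell^2$-cocycle on a general group into an analytic family of uniformly bounded representations is not automatic, and this is precisely where the work in \cite{EM} lies. Second, your ``direct sum of $\beta_1^{(2)}(H)$ orthogonal cocycle representatives'' tacitly treats $\beta_1^{(2)}(H)$ as an integer and the reduced cohomology as a genuine Hilbert space with that many orthogonal directions; in reality $\beta_1^{(2)}(H)$ is a von Neumann dimension and need not be integral, so the linear-growth claim has to be extracted from the trace more carefully than a naive orthogonal-family argument allows. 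The actual argument in \cite{EM} proceeds via random forests and Gaboriau's cost inequality rather than through a direct Pytlik--Szwarc construction, which sidesteps both of these issues.
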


We are now ready to prove Theorem \ref{1}.

\begin{proof}[Proof of Theorem \ref{1}]
By Theorem \ref{tor} for every integer $n\ge 2$, there exists a group $G_n$ generated by $n$ elements such that $\beta_1^{(2)} (G_n)\ge n-2$. Let $G$ be the direct product of the family $\{G_n\mid n\ge 2\}$. Clearly $G$ is a torsion group and is not unitarizable by Theorem \ref{EM}. To complete the proof it remains to recall that every countable torsion group embeds into a torsion group generated by $2$ elements \cite{Ols91}, and every group containing a non-unitarizable subgroup is non-unitarizable itself.
\end{proof}

To construct torsion free examples, we need another result about hyperbolic groups. Similarly to Lemma \ref{perrel}, it can be easily extracted from the proof of Theorem 2 in \cite{Ols93}. We make this extraction for convenience of the reader and refer to \cite{Ols93} for details and terminology.

\begin{lem}[Olshanskii, \cite{Ols93}]\label{glue}
Let $G$ be a torsion free hyperbolic group, $H$ a non-elementary subgroup of $G$, $t_1, \ldots , t_m$ elements of $G$. Then there exist elements $r_1, \ldots , r_m\in H$ such that the following conditions hold for the quotient group $G_1=G/\ll r_1t_1, \ldots , r_mt_m\rr $.
\begin{enumerate}
\item[(1)] $G_1$ is torsion free hyperbolic.

\item[(2)] The natural image of $H$ is a non-elementary subgroup of $G_1$.
\end{enumerate}
Observe that the images of the elements $t_1, \ldots , t_m$ belong to the image of $H$ in $G_1$.
\end{lem}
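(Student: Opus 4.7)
The plan is to apply the small cancellation theory over hyperbolic groups developed in \cite{Ols93}. I would choose each $r_i \in H$ to be a long, generic word in a free subgroup of $H$ so that the words $r_i t_i$ satisfy a strong small cancellation condition relative to $G$. Once this is in place, the hyperbolicity and torsion-freeness of $G_1$, together with the persistence of the non-elementariness of $H$, will follow from the standard machinery of \cite{Ols93}, in the same spirit as the proof of Lemma \ref{perrel}.

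First, since $G$ is torsion-free hyperbolic and $H$ is non-elementary, I would fix two independent elements $f, g \in H$ of infinite order, so that $\langle f, g \rangle$ is free of rank two and the maximal elementary subgroups $E(f)$ and $E(g)$ intersect trivially. For each $i = 1, \ldots, m$ I would set
$$r_i = f^{a_{i,1}} g^{b_{i,1}} \cdots f^{a_{i,k}} g^{b_{i,k}},$$
with positive exponents $a_{i,j}, b_{i,j}$ growing sufficiently fast with $i$ and $j$. By the genericity arguments in \cite[Sections 3--4]{Ols93}, for suitable choices of these exponents the set of cyclic shifts of the words $r_1 t_1, \ldots, r_m t_m$ satisfies the small cancellation condition required by \cite[Lemma 4.1]{Ols93}.

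Then \cite[Lemma 6.7]{Ols93} gives that $G_1 = G/\ll r_1 t_1, \ldots, r_m t_m \rr$ is hyperbolic. Since $G$ is torsion-free and for generic choices no $r_i t_i$ is a proper power in $G$, the analysis of torsion in small cancellation quotients in \cite{Ols93} yields that $G_1$ is torsion-free as well. To preserve non-elementariness of the image of $H$, I would strengthen the small cancellation parameter so that the projection $G \to G_1$ is injective on a prescribed finite set $S$ containing sufficiently many powers of $f$, $g$ and of words in $\langle f, g \rangle$ witnessing independence; then the images of $f$ and $g$ in $G_1$ remain independent, so the image of $H$ is non-elementary. The main obstacle is verifying the small cancellation condition when the $t_i$ are arbitrary elements of $G$ and only the $r_i$ are under our control. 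The resolution is that by taking each $r_i$ much longer and more generic than $t_i$, any long common piece of cyclic shifts of the relators $r_i t_i$ is forced to lie almost entirely inside the $r_i$ portion, where the cancellation behavior is governed by the generic choice of the exponents $a_{i,j}, b_{i,j}$ and can be made arbitrarily small.
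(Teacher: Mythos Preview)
Your strategy matches the paper's: choose each $r_i \in H$ so that the relators $r_it_i$ satisfy a small cancellation condition over $G$, then invoke \cite[Lemma 6.7]{Ols93} for hyperbolicity, torsion-freeness, and survival of non-elementariness. The execution differs in the shape of the relators and in which lemmas from \cite{Ols93} are actually applicable. The paper fixes a \emph{single} element $g\in H$ with $E(g)=\langle g\rangle$, takes auxiliary elements $x_1,\ldots,x_l\in H$ supplied by \cite[Lemma 3.7]{Ols93}, and uses the relator $t_1\, g^m x_1 g^m \cdots x_l g^m$, so that $r_1 = g^m x_1 g^m \cdots x_l g^m$; the small cancellation condition is then provided by \cite[Lemma 4.2]{Ols93}, and the remaining $t_i$ are handled one at a time by induction. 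Your citation of \cite[Lemma 4.1]{Ols93} is not the right hook: that lemma concerns cyclic shifts of pure powers $W^{\pm m}$ and says nothing about words of the form $r_it_i$ with an arbitrary tail $t_i$. Your two-generator construction with $r_i$ a long word in $f,g$ is morally in the same spirit and could likely be massaged into the format of \cite[Lemma 4.2]{Ols93} (taking $W$ to represent $g$ and the separating pieces to be suitable powers of $f$), but as written the informal claim that ``long common pieces are forced into the $r_i$ portion'' is exactly the content that Lemmas 3.7 and 4.2 are there to supply, and should be replaced by a direct appeal to them.
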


\begin{proof}
Since $G$ is torsion free, all elementary subgroups of $G$ are cyclic. Let $g$ be any non-trivial element of $H$ such that $E(g)=\langle g\rangle $. Let $l$ be a positive integer, $x_1, \ldots, x_l\in H$ elements provided by \cite[Lemma 3.7]{Ols93}. Let $W, X_0, X_1, \ldots , X_l$ be shortest words in a finite set of generators of $G$ representing $g, r_1, x_1, \ldots, x_l$ respectively.

Using \cite[Lemma 4.2]{Ols93} and triviality of finite subgroups in $G$, we obtain that the set of all cyclic shifts of the words $(X_0W^mX_1W^m\cdots X_lW^m)^{\pm 1}$ satisfies a small cancellation condition,which implies   (1) and (2) for the group $$G_1=\langle G \mid X_0W^mX_1W^m\cdots X_lW^m =1\rangle$$ by \cite[Lemma 6.7]{Ols93} if $m$ and $l$ are large enough. Note that $G_1\cong G/\ll r_1t_1\rr$, where $t_1$ is the element of $H$ represented by $W^mX_1W^m\cdots X_lW^m$. Doing the same procedure for $r_2, \ldots , r_m$ we prove the lemma by induction.
\end{proof}

\begin{proof}[Proof of Theorem \ref{2}]
We are going to construct the desired group $G$ as a (torsion free) extention $1\to H\to G\to T\to 1$, where $T$ is a non-unitarizable torsion group provided by Theorem \ref{1} and $H$ has no non-abelian free subgroups. It is easy to show that every such a group $G$ is not unitarizable and does not contain non-abelian free subgroups.

More precisely, let
\begin{equation}\label{T}
T= \langle x,y \mid r_1, r_2, \ldots \rangle ,
\end{equation}
be a presentation of a non-unitarizable torsion group. Without loss of generality we may assume $T$ to be generated by $2$ elements (see the proof of Theorem \ref{1}).  Again we proceed by induction. Let $G_0=\langle x,y, a,b \rangle $ be the free group with basis $x,y, a,b$. In what follows we construct a series of quotients of $G_0$. As in the proof of Theorem \ref{tor}, we keep the same notation for elements of $G_0$ and their images in these quotient groups.

Clearly $H_0=\langle a, b\rangle $ is a non-elementary subgroup of $G_0$. Hence by Lemma \ref{glue}, there exist elements $u_1, \ldots , u_8, v_1\in H$ such that the quotient group
$$
G_1=\langle G_0\mid a^xu_1, a^{x^{-1}}u_2, b^xu_3, b^{x^{-1}}u_4, a^yu_5, a^{y^{-1}}u_6, b^yu_7, b^{y^{-1}}u_8, r_1v_1\rangle
$$
is torsion free hyperbolic, and the image $H_1$ of $H$ in $G_1$ is non-elementary. Without loss of generality we may assume that $u1, \ldots , u_8$ and $v_1$ are words in $\{ a^{\pm 1}, b^{\pm 1}\} $.

We enumerate all finitely generated subgroups $H_1=R_1, R_2 ,\ldots $ of $H_1$. Suppose that for some $i\ge 1$, we have already constructed a group
$$
G_i=\left\langle a,b,x,y\; \left| \; \begin{array}{c}
                                    a^xu_1, a^{x^{-1}}u_2, b^xu_3, b^{x^{-1}}u_4, a^yu_5, a^{y^{-1}}u_6, b^yu_7, b^{y^{-1}}u_8 \\
                                    r_1 v_1, \ldots , r_iv_i\\
                                     w_1, \ldots , w_{k_i}
                                  \end{array}
\right. \right\rangle
$$
such that the following conditions hold. By $H_i$ we denote the subgroup of $G_i$ generated by $a$ and $b$ (i.e., the image of $H_0$ in $G_i$).
\begin{enumerate}
\item[(a)] The group $G_i$ is torsion free hyperbolic.

\item[(b)] The subgroup $H_i=\langle a,b\rangle $ of $G_i$ is non-elementary.

\item[(c)] $v_1, \ldots , v_i$ and $w_1, \ldots , w_{k_i}$ are words in $\{ a^{\pm 1}, b^{\pm 1}\} $.

\item[(d)] For every $j=1, \ldots , i$, the image of $R_j$ in $G_i$ is either cyclic or coincides with the image of $H_i$.
\end{enumerate}
Clearly these conditions hold for $i=1$. Relations $ w_1, w_2, \ldots , w_{k_i}$ are absent in this case.

The group $G_{i+1}$ is obtained from $G_i$ in two steps. First, by parts (a), (b) of the inductive assumption and  Lemma \ref{glue}, we may choose a word $v_{i+1}$ in  $\{ a^{\pm 1}, b^{\pm 1}\} $ such that the quotient group $K_i=G_i/\ll r_{i+1}v_{i+1} \rr $ is torsion free hyperbolic, and the natural image of $H_i$ in $K_i$ is non-elementary.

Further if the natural image of $R_{i+1}$ in $K_i$ is cyclic, we set $G_i=K_i$ and $k_{i+1}=k_i$. Otherwise the image of $R_{i+1}$ in $K_i$ is non-elementary. Indeed it is well-known and easy to prove that every torsion free elementary group is cyclic. Thus we can apply Lemma \ref{glue} to the image of $R_{i+1}$ in $K_i$ and elements $a,b$. Let $z_1, z_2$ be elements of the image of $R_{i+1}$ in $K_i$ such that the quotient group $G_{i+1}=K_i/\ll az_1, bz_2\rr $ is torsion free hyperbolic and the image of $R_{i+1}$ in $G_{i+1}$ is non-elementary.
Recall that $R_{i+1}\le H_1=\langle a, b\rangle $. Hence we can assume that $z_1, z_2$ are words in $\{ a^{\pm 1}, b^{\pm 1}\}$. Since $a=z_1^{-1}$ and $b=z_2^{-1}$ in $G_{i+1}$, the image of $R_{i+1}$ in $G_{i+1}$ coincides with the subgroup $H_{i+1}=\langle a,b\rangle$ of $G_{i+1}$. In particular, $H_{i+1}$ is non-elementary. Note that $az_1$, $bz_2$ are words in $\{ a^{\pm 1}, b^{\pm 1}\}$ as well. Let $w_{k_i+1}=az_1$, $w_{k_i+2}=bz_2$. The inductive step is completed.

Let now $G$ be the inductive limit of the sequence $G_0, G_1, \ldots $. That is,
\begin{equation}\label{Ginfty}
G_i=\left\langle a,b,x,y\; \left| \; \begin{array}{c}
                                    a^xu_1, a^{x^{-1}}u_2, b^xu_3, b^{x^{-1}}u_4, a^yu_5, a^{y^{-1}}u_6, b^yu_7, b^{y^{-1}}u_8 \\
                                    r_1 v_1, r_2v_2, \ldots \\
                                     w_1, w_2, \ldots
                                  \end{array}
\right. \right\rangle
\end{equation}
Let also $H=\langle a,b\rangle $ be the natural image of $H_1$ in $G$. Note that the elements $a^x, a^{x^{-1}}, b^x, b^{x^{-1}}, a^y, a^{y^{-1}}, b^y, b^{y^{-1}} $ belong to $H$ in $G$, as $u_1, \ldots , u_8$ are words in $\{ a^{\pm 1}, b^{\pm 1}\}$. Hence $H$ is a normal subgroup of $G$. Obviously $G/H\cong T$. Indeed after imposing additional relations $a=1$ and $b=1$, the relations corresponding to the first and the third rows of (\ref{Ginfty}) disappear and the second row of (\ref{Ginfty}) becomes $r_1, r_2, \ldots $ (see (c)). After removing $a, b$ from the set of generators, we obtain exactly the presentation (\ref{T}) of $T$.

Thus the group $G$ splits as $1\to H\to G\to T\to 1$. Note that every finitely generated proper subgroup of $H$ is cyclic. Indeed, let $Q$ be a finitely generated subgroup of $H$, $P$ some finitely generated preimage of $Q$ in $H_1$. Then $P=R_i$ for some $i$. The natural homomorphism $P\to Q$ obviously factors through the image of $P=R_i$ in $G_i$. However the image of $R_i$ in $G_i$ is either cyclic or coincides with $H_i$ by (d). Therefore, we obtain that $Q$ is either cyclic or coincides with $H$.

Let $F$ be a nontrivial finitely generated free subgroup of $G$. Then $F\cap H$ is cyclic. Note also that $F\cap H$ is normal in $F$ and $F\cap H\ne 1$ since $F/(F\cap H)\cong FH/H\le T$ is a torsion group. Thus $F$ has a nontrivial normal cyclic subgroup, and hence $F$ is cyclic itself. This shows that $G$ contains no non-abelian free subgroups. Further suppose that some element $g\ne 1$ has finite order in $G$. This means that for some $n>0$, the relation $g^n=1$ follows from relations of the presentation (\ref{Ginfty}). Hence it follows from some finite set of relations of (\ref{Ginfty}), i.e., $g^n=1$ holds in $G_i$ for some $i$ contrary to (a). Finally we note that $G$ is not unitarizable since it surjects onto a non-unitarizable group $T$.
\end{proof}

\begin{rem}
 Let $G$ be finitely generated group, $X$ a finite generating set of $G$, $\varkappa (G, l^2(G), X)$ the Kazhdan
constant of the left regular representation $\lambda _G$ with respect to $X$. More precisely, $\varkappa (G, l^2(G),
X)$ is defined as the supremum of all $\e>0$, such that for every vector $u\in l^2(G)$ of norm $1$, there exists $x\in
X$ such that $\| \lambda _G(x) u -u\| \ge \e $. Recall that a finitely generated group $G$ is {\it amenable} if and only is $\varkappa (G, l^2(G), X)=0$ for every finite generating set $X$ of $G$ \cite{Hul}.

Let
\begin{equation}\label{wa}
\alpha (G)=\inf\limits_{\langle X\rangle =G} \varkappa (G,l^2(G), X)=0,
\end{equation}
where the infimum is taken over all finite generating sets  $X$ of $G$. A finitely generated group $G$ is called {\it weakly amenable} if $\alpha (G)=0$. (A similar notion of weak amenability was considered in \cite{Aetall}.) Clearly every amenable group is weakly amenable. The converse was shown to be wrong in \cite{Osi02}.

Combining methods of \cite{Osi02} and \cite{Osi04}, one can construct a (finitely generated) non-unitarizable torsion group $T$ without free subgroup such that every non-elementary hyperbolic group surjects onto $T$. In particular, every such a group is weakly amenable \cite{Osi02}. This shows that the absence of free subgroups does not imply unitarizability even being combined with weak amenability.

\end{rem}

\end{document}